\newtheorem{thmx}{Theorem}
\newtheorem{thm}{Theorem}[section]
\newtheorem{lem}[thm]{Lemma}
\newtheorem{que}[thm]{Question}
\newtheorem{prob}[thm]{Problem}
\newtheorem{rem}[thm]{Remark}
\newtheorem{ex}[thm]{Example}
\newcommand{\CP}{\mathbb{CP}}
\newcommand{\N}{\mathbb{N}}
\newcommand{\Q}{\mathbb{Q}}
\newcommand{\Z}{\mathbb{Z}}
\newcommand{\SO}{\mathrm{\SO}}
\title[On the realisation problem for mapping degree sets]
{On the realisation problem for mapping degree sets}
\author{Christoforos Neofytidis}
\address{Department of Mathematics, Ohio State University, Columbus, OH 43210, USA}
\email{neofytidis.1@osu.edu}
\author{Hongbin Sun}
\address{Department of Mathematics, Rutgers University - New Brunswick, Hill Center, Busch Campus, Piscataway, NJ 08854, USA}
\email{hongbin.sun@rutgers.edu}
\author{Ye Tian}
\address{Morningside Center of Mathematics, 
Academy of Mathematics and System Science, 
Chinese Academy of Sciences,
Beijing 100190}
\email{ytian@math.ac.cn}
\author{Shicheng Wang}
\address{Department of Mathematical Sciences, Peking University, Beijing 100871, CHINA}
\email{wangsc@math.pku.edu.cn}
\author{Zhongzi Wang}
\address{Department of Mathematical Sciences, Peking University, Beijing 100871 CHINA}
\email{wangzz22@stu.pku.edu.cn}
\date{\today}
\subjclass[2010]{55M25}
\keywords{Mapping degree, realisation, 3-manifold, hyperbolic manifold, product}
\begin{document}

\maketitle

\begin{abstract} 
The set of degrees of maps $D(M,N)$, where $M,N$ are closed oriented $n$-manifolds, always contains 0 and the set  of degrees of self-maps $D(M)$ always contains $0$ and $1$. 
Also, if $a,b\in D(M)$, then $ab\in D(M)$; a set $A\subseteq\Z$ so that $ab\in A$ for each $a,b\in A$ is called multiplicative. On the one hand, not every infinite set of integers (containing $0$) is a mapping degree set~\cite{NWW} and, on the other hand, every finite set  of integers (containing $0$) is the mapping degree set of some $3$-manifolds~\cite{CMV}. We show the following:
\begin{itemize}
\item[(i)] Not every multiplicative set $A$ containing $0,1$ is a self-mapping degree set.
\item[(ii)] For each $n\in\N$ and $k\geq3$, every $D(M,N)$ for $n$-manifolds $M$ and $N$ is $D(P,Q)$ for some $(n+k)$-manifolds $P$ and $Q$.
\end{itemize}
As a consequence of (ii) and~\cite{CMV}, every finite set of integers (containing $0$) is the mapping degree set of some $n$-manifolds for all $n\neq 1,2,4,5$.
\end{abstract}
\section{Introduction}

Let $M,N$ be two closed oriented manifolds  of the same dimension.
The degree of a map $f\colon M\to N$, denoted by $\deg(f)$, is 
probably one of the  oldest and most fundamental concepts in topology.
The {\em set of degrees of maps} from $M$ to $N$ is defined by
\[
D(M,N):=\{d\in\Z \ | \ \exists \ f\colon M\to N, \ \deg(f)=d\}.
\]
 When $M=N$, the {\em set of degrees of self-maps} $D(M,M)$ is denoted by $D(M)$.

The following question, about realising subsets of integers as mapping degree sets, has been circulating for some years, but  formally appeared only recently:

\begin{prob}\label{problem1}\cite[Problem 1.1]{NWW}
Given a set $A\subseteq\Z$ with $0\in A$, are there closed oriented manifolds $M$ and $N$ such that $D(M,N)=A$?
\end{prob}

On the one hand, a negative answer has been given for infinite sets:

\begin{thmx}\label{thmA}\cite[Theorem 1.3]{NWW}
There exists an infinite set $A\subseteq\Z$ with $0\in A$ which is not $D(M,N)$, for any closed oriented $n$-manifolds $M,N$.
\end{thmx}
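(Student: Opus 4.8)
The plan is to isolate structural constraints that \emph{every} infinite mapping degree set must satisfy, and then to exhibit one explicit infinite set (containing $0$) that violates them. Since by~\cite{CMV} every finite set containing $0$ is realisable, the obstruction cannot come from any ``local'' arithmetic feature of the set; it must come entirely from the interaction of infinitude with the multiplicative nature of degrees. Accordingly, the target set $A$ should be infinite yet \emph{multiplicatively sparse}.

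First I would record the closure properties coming from composition. If $f\colon M\to N$ has degree $d$, if $h\colon M\to M$ has degree $a$, and if $g\colon N\to N$ has degree $b$, then $f\circ h$ and $g\circ f$ have degrees $da$ and $bd$; hence $D(M)\cdot D(M,N)\subseteq D(M,N)$ and $D(M,N)\cdot D(N)\subseteq D(M,N)$. Thus $D(M,N)$ is a module over the multiplicative monoids $D(M)$ and $D(N)$, each of which contains $1$ and is multiplicative. A multiplicative submonoid of $\Z$ containing some $a$ with $|a|\ge 2$ automatically contains the infinite geometric progression $\{a^{k}\}_{k\ge 0}$; a finite one is therefore contained in $\{-1,0,1\}$. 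Consequently, if either $D(M)$ or $D(N)$ is infinite, then $D(M,N)$ contains an infinite geometric progression $\{a^{k}d\}_{k\ge 0}$ for any nonzero $d\in D(M,N)$.

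Next I would import the simplicial volume estimate $|\deg f|\cdot\|N\|\le\|M\|$, which forces any infinite $D(M,N)$ to have $\|N\|=0$. Combining this with the previous paragraph, the expected dichotomy is: an infinite realisable set either contains an infinite geometric progression, or else both self-map monoids $D(M)$ and $D(N)$ are contained in $\{-1,0,1\}$. To defeat the first alternative I would take the super-exponential set
$$A:=\{0\}\cup\{\pm 2^{2^{n}} : n\ge 0\},$$
whose nonzero elements have consecutive ratios $2^{2^{n}}\to\infty$; hence no subset of $A$ is an infinite geometric progression (such a progression has constant ratio), and $A$ cannot be $D(M,N)$ whenever one of the self-map monoids is infinite.

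The main obstacle is the remaining, genuinely rigid case: ruling out $D(M,N)=A$ when $D(M)$ and $D(N)$ are both contained in $\{-1,0,1\}$ while $D(M,N)$ is infinite. Here the module structure gives no leverage, so one needs a finiteness statement of the form ``if the self-map degree sets of $M$ and $N$ are both trivial, then $D(M,N)$ is finite,'' presumably via a functorial semi-norm on homology sharper than the simplicial volume, or via a direct analysis of the possible targets with $\|N\|=0$ and $D(N)\subseteq\{-1,0,1\}$. Establishing this rigidity — and checking that it holds uniformly in the dimension $n$, so that the single set $A$ fails to be realised as $D(M,N)$ in \emph{every} dimension — is where I expect the real work to lie.
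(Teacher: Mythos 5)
There is a genuine gap, and you have correctly located it yourself: the entire argument hinges on the unproven ``rigidity'' claim that if $D(M)$ and $D(N)$ are both contained in $\{-1,0,1\}$ then $D(M,N)$ is finite. No such theorem is available, and nothing in the simplicial-volume toolkit supplies it: once $\|N\|=0$ (which, as you note, is forced whenever $D(M,N)$ is infinite), the $\ell^1$-semi-norm gives no upper bound on degrees at all, and the manifolds with $\|N\|=0$ and small self-degree monoids (e.g.\ connected sums of non-trivial circle bundles over hyperbolic surfaces, precisely the manifolds used in~\cite{CMV} and~\cite{NWW}) are exactly where the subtle behaviour of mapping degree sets lives. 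In fact the paper explicitly poses the construction of \emph{any} concrete non-realisable set as an open problem (Question~\ref{Q1}), and observes via~\cite{LU} that the known non-realisable sets arise only from non-constructive existence arguments; so completing your programme would resolve an open question rather than reprove Theorem~\ref{thmA}. The preliminary reductions you make (the module structure $D(M)\cdot D(M,N)\subseteq D(M,N)$, the dichotomy forcing either a geometric progression inside $A$ or trivial self-degree monoids, and the choice of the super-exponentially sparse set $\{0\}\cup\{\pm2^{2^n}\}$) are all correct, but they dispose only of the easy half of the dichotomy.

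The actual proof of Theorem~\ref{thmA} in~\cite{NWW} — mirrored in this paper's proof of Theorem~\ref{non1} — is a non-constructive cardinality count and requires none of this structure theory: there are uncountably many infinite subsets of $\Z$ containing $0$, whereas by Mather's theorem~\cite{Ma} there are only countably many homotopy types of closed oriented manifolds, and $D(M,N)$ depends only on the homotopy types of $M$ and $N$; hence only countably many sets are realisable and some infinite set containing $0$ is omitted. If you want to salvage your approach, you would need either a genuinely new finiteness theorem for $D(M,N)$ under the hypothesis $D(M),D(N)\subseteq\{-1,0,1\}$, or a different invariant (a functorial semi-norm finer than simplicial volume, as you suggest) that is nonzero on some class of $N$ with $\|N\|=0$ — neither of which is currently known.
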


On the other hand, using 3-manifolds, which are connected sums of non-trivial circle bundles over hyperbolic surfaces, and their products, it was proved in~\cite[Theorems 1.7 and 1.9]{NWW} that many finite subsets of integers are mapping degree sets, including  finite arithmetic progressions (containing $0$)  and  finite geometric progressions of positive integers starting from 1 (and containing $0$). So, the following natural problems arose:

\begin{prob}\label{problem3}  \cite[Problem 1.3]{NWW}
Suppose $A$ is a finite set of integers containing $0$. Is $A=D(M,N)$, for some closed oriented $n$-manifolds $M$ and $N$?
\end{prob}

\begin{prob}\label{arith} \cite[Problem 1.6]{NWW}
Can every  arithmetic progression  containing $0$  be realised as $D(M,N)$, for some closed oriented $n$-manifolds $M,N$?
\end{prob}

\begin{prob}\label{geom} \cite[Problem 1.8]{NWW}
Together with $0$,  can every geometric progression of integers  be realised as $D(M,N)$, for some closed oriented $n$-manifolds $M,N$?
\end{prob}

Recently, Costoya, Mu\~noz and  Viruel gave a complete positive answer to Problem \ref{problem3} in a stronger form:
 
\begin{thmx}\label{t:CMV} \cite[Theorem A]{CMV}.
If $A$ is a finite set of integers containing $0$, then $A = D(M,N)$
for some closed oriented connected $3$-manifolds $M,N$.
\end{thmx}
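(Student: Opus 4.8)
The plan is to reduce, using the freedom in choosing orientations, to realising a prescribed finite set of positive integers $A=\{0,a_1,\dots,a_n\}$ with $0<a_1<\dots<a_n$ (the element $0$ comes for free since every mapping degree set contains it; reversing the orientation of $N$ negates all degrees, and when both signs are required I would introduce dedicated source pieces mapping with reversed orientation). Realising $A$ as $D(M,N)$ then splits into two tasks that I would attack separately: the \emph{existence} task of producing, for each $a_i$, a map $M\to N$ of degree exactly $a_i$, and the \emph{exclusion} task of guaranteeing that no map $M\to N$ has degree outside $A$. All of the difficulty sits in the exclusion task, so I would first secure a crude a priori bound and then refine it.

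For the a priori bound I would invoke Gromov's simplicial volume: any degree-$d$ map satisfies $|d|\,\|N\|\le\|M\|$, so choosing a target $N$ with $\|N\|>0$ (hyperbolic, or with a hyperbolic piece in its JSJ decomposition) and controlling $\|M\|$ caps the attainable degrees by $\lfloor\|M\|/\|N\|\rfloor$, confining the problem to the finite window $\{-a_n,\dots,a_n\}$. For the existence task the natural first attempt is to assemble degrees by connected sums: for an aspherical target $Y$ one has the additivity $D(M_1\#\cdots\#M_k,\,Y)=D(M_1,Y)+\cdots+D(M_k,Y)$, since maps into an aspherical manifold are determined up to homotopy by $\pi_1$ and degree adds over a free decomposition of the source. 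Taking $k$ summands each with $D(M_i,Y)=\{0,1\}$ realises every interval $\{0,1,\dots,k\}$, and blocks with $D(M_i,Y)=\{0,c_i\}$ realise the set of all subset sums $\{\sum_{i\in S}c_i : S\subseteq\{1,\dots,k\}\}$.

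The main obstacle, which is exactly where the naive approach breaks, is that a general finite set is not a subset sum set: for instance $\{0,1,3\}$ is not of the above form for any choice of the $c_i$, so connected sums of hyperbolic pieces can never excise an unwanted intermediate degree such as $2$. To carve out the prescribed gaps I would replace hyperbolic pieces by Seifert-fibred and graph-manifold pieces, whose mapping degrees are constrained number-theoretically: for a circle bundle over a hyperbolic surface a fibre-preserving map multiplies the base degree by the fibre degree subject to a compatibility condition on the Euler numbers, forcing $D$ to consist only of integers obeying prescribed divisibility relations, and hence to omit many values. I would then choose several target pieces $N_1,\dots,N_r$, each omitting one of the unwanted integers in the window while still admitting all of $A$, and form $N=N_1\#\cdots\#N_r$; since a map into a connected sum has the same degree after collapsing onto each summand, one gets $D(M,N)\subseteq\bigcap_k D(M,N_k)$, and the intersection trims the realisable set down to $A$. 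The delicate point I expect to be hardest is verifying these two things at once: that the intersection retains every wanted degree, and that a genuine map $M\to N$ of each degree $a_i$ exists (rather than merely separate maps to the individual summands); establishing this forces a careful analysis of an arbitrary map along the connecting spheres and along the JSJ decompositions of $M$ and $N$.
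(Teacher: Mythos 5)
This statement is not proved in the paper at all: it is Theorem A of Costoya--Mu\~noz--Viruel \cite{CMV}, quoted as a black box and used only as input to the proof of the paper's Theorem on higher-dimensional realisation. So there is no internal proof to compare against, and I can only judge your proposal against the known argument of \cite{CMV}. Your choice of building blocks is exactly right --- the paper itself remarks that all $3$-manifolds used in \cite{CMV} are connected sums of non-trivial circle bundles over hyperbolic surfaces --- and the three structural tools you name are the correct ones: simplicial volume to bound the window of attainable degrees, the additivity $D(M_1\#\cdots\#M_k,Y)=D(M_1,Y)+\cdots+D(M_k,Y)$ for an aspherical target $Y$ of dimension at least $3$ (null-homotope the map on the separating spheres using $\pi_2(Y)=0$), and the upper bound $D(M,N_1\#\cdots\#N_r)\subseteq\bigcap_k D(M,N_k)$ coming from the collapse maps.

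As a proof, however, the proposal has two genuine gaps, and they are precisely where all the work of \cite{CMV} lives. First, the arithmetic/exclusion step: you assert that circle-bundle targets ``omit many values'' and that one can choose $N_1,\dots,N_r$ whose degree sets intersect in exactly $A$, but you give no construction. What is actually needed is the computation (going back to \cite{NWW}) that for circle bundles over hyperbolic surfaces with non-zero Euler numbers $e,f$ the non-zero degrees of maps $M_{g,e}\to M_{h,f}$ are constrained by a relation of the form $df=e\ell^2$ with $\ell$ a degree of a map between the base surfaces, followed by a genuinely non-trivial combinatorial argument showing that subset sums of such sets, intersected over several targets, can be made to equal an arbitrary prescribed finite set while retaining every element of $A$; nothing in your sketch explains why, say, $\{0,1,3\}$ arises this way. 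Second, the existence step for a connected-sum target: knowing $a_i\in D(M,N_k)$ for every $k$ does not yield a map $M\to N_1\#\cdots\#N_r$ of degree $a_i$, since there is no degree-one map $N_1\vee\cdots\vee N_r\to N_1\#\cdots\#N_r$; one must explicitly build a connected sum of maps by normalising each $f_k$ over a ball and then check that this construction creates no extra degrees. You correctly flag this as the delicate point but leave it entirely open. Finally, the reduction of mixed-sign sets to positive ones via ``dedicated source pieces mapping with reversed orientation'' is a phrase, not an argument. In short: correct toolbox and correct overall architecture, consistent with what \cite{CMV} actually does, but the two essential steps are missing, so the theorem is not proved.
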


It is  rather surprising that
all $3$-manifolds used in Theorem \ref{t:CMV} are just connected sums of non-trivial circle bundles over hyperbolic surfaces.
Further realisability results are shown in~\cite{CMV}, including that any finite set of integers containing $0$ is the mapping degree set of
some simply connected  $(4k-1)$-manifolds for $k>3$  \cite[Theorem C]{CMV}.

The self-mapping degree $D(M)$, a subset  of integers associated with a given closed oriented manifold $M$, 
 is  very interesting from a number theoretic  point of view. We say that a set $A\subseteq\Z$ is {\em multiplicative},  if $a, b \in A$ implies that $ab\in A$.
When $A$ is finite and multiplicative, then clearly $A\subseteq\{-1, 0, 1\}$.
Note that $D(M)$ is a multiplicative set containing $\{0, 1\}$:  $0$ is realised by a constant map, $1$ is realised by the identity, and  if $a, b\in D(M)$, then $ab\in D(M)$, which is realised by a self-map $g\circ f$ of $M$, where $g$ and $f$ are self-maps of $M$ realising $a$ and $b$ respectively.

Similarly to Problem \ref{problem1}, the following question has been also  circulating over the years:

\begin{prob}\label{problem2}
Suppose $A$ is a multiplicative set of integers containing $\{0, 1\}$.  Is there a closed oriented manifold $M$ with $D(M)=A$?
\end{prob}

It is worth mentioning that, 
although any finite mapping degree set is the mapping degree set of some $3$-manifolds, the corresponding statement is not true for infinite self-mapping degrees, as explained below: 

\begin{ex}\label{cpn}
Recall that $D(\CP^n)=\{k^n \ | \  k\in \Z\},$
where $\CP^n$ is the $n$-dimensional complex projective plane. 
However, for $n>2$,  one can check, with some number theoretic arguments, that $\{k^n \ | \  k\in \Z\}$ is not the self-mapping degree set $D(M)$ of any 3-manifold $M$, according to \cite{SWWZ}. 
\end{ex}

Although the proof of the claim $D(\CP^n)=\{k^n \ | \  k\in \Z\}$ in Example \ref{cpn} should be well-known, we will include it at the end of paper, since we could not locate a precise reference.

\section{Results and questions}

Our first result in this note is a negative answer to Problem \ref{problem2}:
 
\begin{thm}\label{non1}
There exists a multiplicative set $A$ containing $\{0, 1\}$ which is not $D(M)$ for any closed oriented $n$-manifold $M$. 
\end{thm}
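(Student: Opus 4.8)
The plan is to exhibit an explicit multiplicative set $A$ containing $\{0,1\}$ and then show, by contradiction, that no closed oriented $n$-manifold $M$ can have $D(M)=A$. The natural source of obstructions is the behaviour of $D(M)$ under the multiplicative structure together with the constraints coming from Poincar\'e duality: for a closed oriented $n$-manifold, any self-map of nonzero degree $d$ acts on $H^*(M;\Q)$ and on the fundamental class in a way that forces divisibility relations among the degrees, so $D(M)$ cannot be an arbitrary multiplicative monoid. The first step is therefore to identify a \emph{closure property} that $D(M)$ must always satisfy but that some multiplicative set fails. A promising candidate is a root-closure or divisor-closure condition: for instance, using the action on cohomology one often finds that if $d\in D(M)$ then the prime factorisation of $d$ is constrained, or that $D(M)$ is closed under taking certain factors. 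The strategy is to pick $A$ to be the multiplicative closure of a cleverly chosen finite set of integers (for example $\{0,1,p\}$ for a prime $p$, giving $A=\{0\}\cup\{p^k : k\geq 0\}$, or a set built from two multiplicatively independent primes) and then argue that realising this exact set is impossible.

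The key steps, in order, are as follows. First I would recall and make precise the algebraic constraints on $D(M)$. The most useful is that $D(M)$ is the set of degrees of self-maps, and a self-map $f$ of degree $d$ induces $f^*$ on rational cohomology whose action on the top class is multiplication by $d$; combined with the multiplicativity of the cup product this yields restrictions on which $d$ can occur, typically of the form that each $d\in D(M)\setminus\{0\}$ must be realisable by an endomorphism compatible across all cohomological degrees. Second, I would analyse what multiplicative sets can arise as such degree sets, isolating a structural feature — most plausibly that the nonzero part of $D(M)$, being a submonoid of $(\Z,\times)$ arising from actual maps, must contain $-1$ or be closed under a specific operation, or must avoid being a "thin" monoid like the single-generator monoid $\{p^k\}$. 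Third, having fixed the target set $A$, I would assume for contradiction that $D(M)=A$ and derive that $M$ would have to support self-maps of degree $p$ but none of degree, say, $2$ or $-1$, and show this contradicts either the cohomological constraint or a Lefschetz/trace argument.

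The heart of the argument, and the step I expect to be the main obstacle, is pinning down exactly which closure property \emph{every} $D(M)$ satisfies and verifying that the chosen $A$ violates it. This is delicate because $D(M)$ is genuinely complicated in high dimensions, and the cohomological obstructions alone are often insufficient to rule out a given multiplicative set: one must combine them with finer invariants (rational homotopy type, the structure of the self-map monoid, or results on which monoids occur as degree monoids). The construction of $A$ must be tuned so that the obstruction bites; I would likely lean on number-theoretic input — exactly the flavour invoked in Example~\ref{cpn} via~\cite{SWWZ} — to show that the multiplicative pattern forced by Poincar\'e duality cannot coincide with the prescribed $A$.

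Finally, once the violation is established I would assemble the pieces into a clean contradiction: the set $A$ is manifestly multiplicative and contains $\{0,1\}$ by construction, yet the assumption $D(M)=A$ forces a self-map whose existence contradicts the closure property proved in the earlier steps. This yields an $A$ that is a multiplicative set containing $\{0,1\}$ but is not $D(M)$ for any closed oriented $n$-manifold, which is precisely the assertion of the theorem. The dimension $n$ should play essentially no role beyond ensuring the cohomological constraints are available, so the argument is expected to be uniform in $n$.
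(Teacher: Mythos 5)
Your proposal has a genuine gap: it is organised around finding a ``closure property'' that every self-mapping degree set $D(M)$ must satisfy and then exhibiting an explicit multiplicative set violating it, but you never identify such a property, and you acknowledge that this is the main obstacle. This is not a repairable omission of detail --- it is precisely the part of the problem that is open. The paper itself poses the task of producing a \emph{concrete} non-realisable multiplicative set as Question~\ref{Q1}, and your specific candidate $A=\{0\}\cup\{p^k : k\geq 0\}$ is an infinite geometric progression together with $\{0,1\}$, whose realisability as some $D(M)$ is exactly the open Question~\ref{Q2}(2). The cohomological and Poincar\'e duality constraints you invoke do restrict $D(M)$ for particular manifolds (as in Example~\ref{cpn}), but they do not yield a universal obstruction valid for all closed oriented manifolds of all dimensions, and no such explicit obstruction is currently known.

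The paper's proof of Theorem~\ref{non1} avoids this difficulty entirely by a cardinality argument: the assignment $S\mapsto\Pi(S)$ injects the power set of the primes into the collection of multiplicative subsets of $\Z$ containing $\{0,1\}$, so there are uncountably many such sets; on the other hand, by Mather's theorem \cite{Ma} there are only countably many homotopy types of closed oriented $n$-manifolds, and $D(M)$ is a homotopy invariant, so only countably many sets arise as some $D(M)$. Hence some multiplicative set is not realised. This is non-constructive (indeed, combined with \cite{LU} it shows a non-recursively-enumerable multiplicative set must exist), which is why the paper can prove existence while leaving your intended explicit construction as an open question. If you want to salvage your approach, you would need to either prove a new universal closure property of self-mapping degree sets or switch to the counting argument.
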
 

Since higher dimensional manifolds are richer than those in lower dimensions,  it is natural to expect  that $D(M, N)$, for any pair of $n$-manifolds $M,N$, is realised by 
a pair of higher dimensional manifolds. The next result supports this expectation:

\begin{thm}\label{inclusion}  
For each $n\in\N$ and $k\geq3$, every mapping degree set $A$ of $n$-manifolds is the mapping degree set of some $(n+k)$-manifolds. Moreover there are infinitely many pairs of $(n+k)$ manifolds realising $A$. The same is true for self-mapping degree sets.
\end{thm}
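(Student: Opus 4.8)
The natural strategy is to stabilise by a product: given $A=D(M,N)$, fix a closed oriented $k$-manifold $W$ and try to show $D(M\times W,\,N\times W)=D(M,N)$, setting $P=M\times W$ and $Q=N\times W$ (the self-map statement is the case $N=M$). Infinitely many pairwise non-homeomorphic choices of $W$ then yield infinitely many pairs. One inclusion is free: a map $f\colon M\to N$ of degree $d$ gives $f\times\id_W$ of degree $d$, and the constant map gives $0$, so $D(M,N)\subseteq D(P,Q)$. Everything hinges on choosing $W$ so rigid that the reverse inclusion holds.

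For the building block I would take, for each $k\geq3$, a \emph{chiral} closed oriented hyperbolic $k$-manifold $W$, i.e.\ one with no orientation-reversing self-homeomorphism; these exist in every dimension $k\geq3$ and in infinitely many commensurability classes. Such $W$ has $D(W)=\{0,1\}$: since $\|W\|>0$, Gromov's inequality gives $|\deg h|\leq1$ for every self-map $h$; as $\pi_1(W)$ is Hopfian, a degree $\pm1$ self-map is $\pi_1$-surjective, hence a homotopy equivalence, and by Mostow rigidity is homotopic to an isometry, so chirality excludes $-1$. This is exactly what fails in dimension $k=2$, where every surface admits an orientation-reversing homeomorphism and $D(W)=\{-1,0,1\}$; this is the source of the hypothesis $k\geq3$ (and is what prevents the corollary from reaching dimension $5$).

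For the reverse inclusion, let $F\colon M\times W\to N\times W$ be arbitrary and set $\Gamma=\pi_1(W)$, a torsion-free word-hyperbolic group with trivial centre. Because the target factor $W$ is aspherical, the composite $p_W\circ F\colon M\times W\to W$ is determined up to homotopy by the homomorphism $\phi=(p_W\circ F)_*\colon\pi_1(M)\times\Gamma\to\Gamma$, and the two images $A=\phi(\pi_1 M)$ and $B=\phi(\Gamma)$ commute. I would run the following case analysis. If $B$ is non-elementary, its centraliser in $\Gamma$ is trivial, so $A=1$ and $p_W\circ F\simeq g\circ\mathrm{pr}_W$ for a self-map $g$ of $W$ of degree $e\in\{0,1\}$; since this factors through the projection, $F^*p_W^*\omega_W=e\,\mathrm{pr}_W^*\omega_W$ is a pure K\"unneth class, so cupping with $F^*p_N^*\omega_N$ and reading off the top class gives $\deg F=e\cdot\deg\bar F$, where $\bar F=p_N\circ F|_{M\times\{w_0\}}\colon M\to N$; thus $\deg F\in D(M,N)$. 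If $B$ is infinite cyclic, then $A$ lies in the cyclic centraliser of a generator, the whole image is elementary, $p_W\circ F$ factors up to homotopy through a circle, and $F^*p_W^*\omega_W=0$, whence $\deg F=0$.

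The remaining case, $B=1$ with $A$ non-elementary, is the main obstacle. Here $p_W\circ F\simeq\rho\circ\mathrm{pr}_M$ collapses the $W$-direction, and $F$ can behave like the coordinate-swap of $W\times W$ (which has degree $-1$ in odd dimensions), so without further hypotheses $\deg F$ may escape $D(M,N)$. I would remove this case by choosing $W$ generically with respect to $M$ and $N$: taking the volume of $W$ larger than the simplicial volumes occurring in $M,N$, and $\Gamma$ directly indecomposable and not isomorphic to any factor of $\pi_1 M$ or $\pi_1 N$, so that no homomorphism $\pi_1 M\to\Gamma$ or $\pi_1 N\to\Gamma$ pulls back $\omega_W$ nontrivially, forcing $\rho^*\omega_W=0$ and hence $\deg F=0$. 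For the self-map case one also invokes the rigidity of products of hyperbolic manifolds, namely that automorphisms of $\pi_1(M)\times\Gamma$ preserve the $\Gamma$-factor once $\Gamma$ is indecomposable, centreless and unrelated to $\pi_1 M$, ruling out exchange maps. Making this genericity precise, i.e.\ showing that a single large-volume chiral $W$ simultaneously kills all domination by the fixed manifolds $M$ and $N$, is the technical heart of the argument.
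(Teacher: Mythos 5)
Your overall strategy --- stabilising by a product with a rigid closed hyperbolic $k$-manifold $W$ satisfying $D(W)=\{0,1\}$, and showing $D(M\times W,N\times W)=D(M,N)$ with infinitely many choices of $W$ --- is exactly the paper's (Theorem \ref{inclusionconcrete}). The paper obtains $D(W)=\{0,1\}$ from Belolipetsky--Lubotzky (closed hyperbolic $k$-manifolds with prescribed isometry group of odd order) together with Gromov's inequality and Mostow rigidity, which is essentially your chirality argument and simultaneously supplies the infinitely many $W$'s. The main structural difference is that the paper does not re-derive the product formula: it invokes \cite[Theorem 4.6]{NWW} (Theorem \ref{t:product}), which gives $D(M\times W,N\times W)=D(M,N)\cdot D(W)$ once (i) $W$ is not dominated by a nontrivial product \cite{KL}, (ii) every map $M\to W$ is trivial on $H_k(\cdot;\Q)$, and (iii) $D(W)=\{0,1\}$. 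Your $\pi_1$-based case analysis (asphericity of $W$ plus centralisers in the torsion-free hyperbolic group $\Gamma$) is a plausible route to such a formula for aspherical targets, and the first two branches of your trichotomy are sound.

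The genuine gap is the case you flag yourself, $B=1$: you need every map $\rho\colon M\to W$ to kill $\omega_W$, and you leave ``making this genericity precise'' open, proposing extra group-theoretic conditions (indecomposability of $\Gamma$, non-isomorphism with factors of $\pi_1M$) that are neither necessary nor obviously sufficient --- a nontrivial class in $H_k(M;\Q)$ can map onto $[W]$ without any splitting of $\pi_1M$ being involved. The paper closes exactly this point with a three-line simplicial-volume argument (Lemma \ref{l:norm}): choose a basis $\alpha_1,\dots,\alpha_n$ of $H_k(M;\Q)$ represented by integral classes and take $W=L_i$ from Lemma \ref{l:hyp} with $\|W\|>\max_i\|\alpha_i\|_1$ (possible since the volumes of the $L_i$ are unbounded, by Wang's finiteness theorem for $k>3$ and Meyerhoff's orbifold volume bound for $k=3$); then writing $H_k(f)(\alpha_i)=d_i[W]$ and using functoriality of the $\ell^1$-seminorm forces $d_i=0$. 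So the step you call the ``technical heart'' is not a genericity problem at the level of fundamental groups at all; it is a one-line norm comparison. (Note also that for $k>n$ condition (ii) is automatic since $H_k(M;\Q)=0$, so the volume condition is only needed when $k\le n$.)
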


Combining Theorem \ref{t:CMV} and Theorem \ref{inclusion}, we extend Theorem \ref{t:CMV} in all dimensions $\geq6$.
An analog for self-mapping degree sets  also follows from the proof of Theorem \ref{inclusion}.

\begin{thm}\label{extension} 
For each positive integer $n\ne 1, 2, 4, 5$,
every finite set of integers containing $0$ is the mapping degree set 
of some  $n$-manifolds. 

For each positive integer $n\ne 1, 2$, every finite multiplicative set  containing $\{0,1 \}$
is the self-mapping degree set of some $n$-manifold. 
\end{thm}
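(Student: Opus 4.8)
The plan is to obtain the two assertions separately: the mapping-degree half by feeding Theorem~\ref{t:CMV} into Theorem~\ref{inclusion}, and the self-mapping half by first classifying the sets that can occur and then realising them with hyperbolic manifolds. For the mapping-degree statement, let $A\subseteq\Z$ be finite with $0\in A$. By Theorem~\ref{t:CMV} we have $A=D(M,N)$ for some closed oriented $3$-manifolds $M,N$, which settles $n=3$. For $n\geq6$ I would write $n=3+k$ with $k\geq3$ and apply Theorem~\ref{inclusion} to the pair $(M,N)$, obtaining closed oriented $n$-manifolds $P,Q$ with $D(P,Q)=D(M,N)=A$. Thus every $n\in\{3\}\cup\{6,7,8,\dots\}$, i.e.\ every $n\neq1,2,4,5$, is covered. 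Dimensions $4$ and $5$ fall outside the reach of this argument precisely because the shift in Theorem~\ref{inclusion} is by at least $3$: a realisation in dimension $n$ this way would require a base realisation in dimension $n-k\in\{1,2\}$, but in dimension $1$ the only closed oriented manifold is $S^1$ with $D(S^1,S^1)=\Z$, and in dimension $2$ the target surface is amphichiral, forcing $D(M,N)$ to be symmetric about $0$, so a general finite set containing $0$ is simply not available as a base case there.

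For the self-mapping statement, I would first note that the finite multiplicative sets containing $\{0,1\}$ are exactly $\{0,1\}$ and $\{-1,0,1\}$, since any element of absolute value at least $2$ generates infinitely many distinct powers. It therefore suffices to realise these two sets as $D(M)$ in each dimension $n\geq3$, and I would do this uniformly with closed hyperbolic $n$-manifolds. If $M$ is such a manifold then $\|M\|>0$, and the functoriality estimate $|d|\cdot\|M\|\leq\|M\|$ for a degree-$d$ self-map forces $D(M)\subseteq\{-1,0,1\}$. Moreover a degree $\pm1$ self-map of the aspherical manifold $M$ is $\pi_1$-surjective, hence a homotopy equivalence (the fundamental group being Hopfian), hence homotopic to an isometry by Mostow rigidity; consequently $-1\in D(M)$ if and only if $M$ admits an orientation-reversing isometry. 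Choosing $M$ amphichiral then realises $\{-1,0,1\}$, and choosing $M$ \emph{chiral} realises $\{0,1\}$. Dimensions $1,2$ are excluded as before: $D(S^1)=\Z$ is infinite, and every closed oriented surface carries an orientation-reversing homeomorphism, so $\{0,1\}$ never occurs.

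The mapping-degree half is essentially bookkeeping on top of two quoted theorems, the only care being the honest count of reachable dimensions, so I expect no real difficulty there. The genuine content, and the main obstacle, is the geometric input needed for the self-mapping half: the existence, in \emph{every} dimension $n\geq3$ (in particular $n=4,5$, which Theorem~\ref{inclusion} cannot reach), of closed hyperbolic $n$-manifolds of each chirality type. Amphichiral examples are comparatively easy to produce from symmetric reflection-group constructions, so the delicate point is the existence of chiral closed hyperbolic $n$-manifolds in all dimensions $\geq3$, which is exactly what realises $\{0,1\}$; here I would invoke the known constructions of closed hyperbolic manifolds admitting no orientation-reversing self-homeomorphism.
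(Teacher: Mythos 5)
Your proposal is correct and follows the paper's strategy almost exactly: the first half is the same two-line deduction (Theorem~\ref{t:CMV} for $n=3$, then Theorem~\ref{inclusion} for $n=3+k$, $k\geq3$), and the second half begins, as in the paper, by observing that $\{0,1\}$ and $\{-1,0,1\}$ are the only finite multiplicative sets containing $\{0,1\}$ and realising $\{0,1\}$ by a chiral closed hyperbolic $n$-manifold --- which the paper already supplies in every dimension $n\geq3$ via Lemma~\ref{l:hyp} (Belolipetsky--Lubotzky plus Mostow rigidity), so you need not appeal to any external chirality result. The one genuine divergence is the case $\{-1,0,1\}$: you propose an amphichiral closed hyperbolic $n$-manifold, whereas the paper takes the connected sum $L\#\overline{L}$ of the already-constructed chiral manifold $L$ with its orientation reversal. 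The reflection through the connecting $(n-1)$-sphere gives the degree $-1$ self-map, and Gromov's additivity $\|L\#\overline{L}\|=2\|L\|>0$ keeps $D(L\#\overline{L})\subseteq\{-1,0,1\}$. The paper's route buys self-containment: it needs no new existence theorem beyond Lemma~\ref{l:hyp}. Your route is also valid --- amphichiral closed hyperbolic manifolds do exist in all dimensions (e.g.\ from arithmetic lattices normalised by a reflection) --- but as written it rests on an unreferenced existence claim, so you should either cite a construction or adopt the connected-sum trick, which costs nothing. A small bonus of your version is that the realising manifold for $\{-1,0,1\}$ is itself hyperbolic rather than a connected sum.
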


We believe that Theorem \ref{extension} also holds for $n=4,5$, but we do not have a proof yet. Also, for the sake of completeness, note that the mapping degree sets of 1- and 2-dimensional manifolds are very special: they are either $\Z$ or $\{-k, -k+1, ... ,-1,0,1, ... , k-1, k\}$ (see \cite[Example 1.5]{NWW}).

Constructing specific non-realisable sets seems to be a more subtle problem:

\begin{que}\label{Q1}
Find a concrete subset of integers containing $0$ which is not a mapping degree set, and 
a concrete multiplicative  subset of integers containing $\{0, 1\}$ which is not a self-mapping degree set.
\end{que}

L\"oh and Uschold proved that each $D(M,N)$ is  recursively enumerable  \cite[Proposition A.1]{LU}. Theorem \ref{non1} and  \cite[Proposition A.1]{LU} 
together  imply that there must exist non-recursively enumerable multiplicative sets.
In Question \ref{Q1} we do hope to find a non-realisable subset of integers that can be written down explicitly without using any non-constructive existence result.

There are many examples of manifolds $M$ with $D(M)$  an infinite arithmetic progression (see  \cite{SWWZ} and its references). So far, 
we do not know any $D(M)$ which is an infinite geometric progression together with $\{0,1\}$. 
Parallel to Problems \ref{arith} and \ref{geom},  we ask the following  realisation question for (multiplicative) arithmetic and geometric progressions as self-mapping degree sets:

\

\begin{que}\label{Q2} \
\begin{itemize}
\item[(1)] Is every infinite arithmetic progression  together with $\{0,1\}$ a self-mapping degree set? 
\item[(2)] Is there an  infinite geometric progression together with $\{0,1\}$ which is a self-mapping degree set?
\end{itemize}
\end{que}

\section{Proofs}

\subsection{Proof of Theorem \ref{non1}}

Let $\mathcal P$ be the set of all prime numbers. For a subset $S\subseteq \mathcal P$, let 
$$\Pi(S)=\{p_1^{a_1}...p_k^{a_k} \ | \ p_1,...,p_k\in S,a_1,...,a_k\in\Z_{\ge0}\}\cup\{0, 1\}.$$

We have the following two elementary lemmas:

\begin{lem}\label{lem1}
$\Pi(S)$ is a multiplicative set.
\end{lem}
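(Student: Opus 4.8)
The plan is to verify the defining closure property directly from the description of $\Pi(S)$: take two arbitrary elements $a,b\in\Pi(S)$ and check that $ab\in\Pi(S)$. The cleanest way to organise this is to note that $\Pi(S)$ consists precisely of $0$ together with those positive integers all of whose prime factors lie in $S$, where $1$ is the empty product (the case $k=0$, or all exponents zero). With this reformulation the closure under multiplication becomes almost immediate.

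First I would dispose of the degenerate cases. If $a=0$ or $b=0$, then $ab=0\in\Pi(S)$ by construction; and if $a=1$ (resp.\ $b=1$) then $ab=b$ (resp.\ $ab=a$) already lies in $\Pi(S)$. This reduces matters to the case where both $a$ and $b$ are positive integers supported on primes of $S$.

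In that remaining case I would write $a=p_1^{a_1}\cdots p_k^{a_k}$ and $b=q_1^{b_1}\cdots q_\ell^{b_\ell}$ with all $p_i,q_j\in S$ and all exponents in $\Z_{\ge0}$. The product $ab=p_1^{a_1}\cdots p_k^{a_k}q_1^{b_1}\cdots q_\ell^{b_\ell}$ is again a finite product of prime powers whose bases all belong to $S$, hence is of exactly the form required for membership in $\Pi(S)$. (If desired one can merge repeated primes and add their exponents, using that $\Z_{\ge0}$ is closed under addition, but this cosmetic step is unnecessary since the definition already admits the bare concatenated product.) Thus $ab\in\Pi(S)$, as required.

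There is no genuine obstacle here: the statement is a bookkeeping consequence of the definition, and the only points demanding any attention are the treatment of the two distinguished elements $0$ and $1$ and the observation that concatenating two products of primes from $S$ yields another such product. The content of the lemma is simply that the set of integers supported on $S$ forms a multiplicative monoid, to which $0$ has been adjoined.
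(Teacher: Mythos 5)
Your proposal is correct and follows essentially the same argument as the paper: dispose of the case where a factor is $0$, then observe that concatenating two prime-power products supported on $S$ yields another such product. The only cosmetic difference is that you treat $1$ as a separate case, whereas the paper absorbs it into the case $\alpha,\beta\geq 1$ by allowing all exponents to be zero.
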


\begin{proof} Let $\alpha$, $\beta\in \Pi(S)$.

Case 1. $\alpha=0$ or $\beta=0$. Then $\alpha\beta=0\in \Pi(S)$.

Case 2. $\alpha,\beta\geq1$. Write 
$$\alpha=p_1^{a_1}...p_k^{a_k},\beta=q_1^{b_1}...q_l^{b_l}, \ \text{where} \ p_1,...,p_k,q_1,...,q_l\in S \ \text{and} \ a_1,...,a_k,b_1,...,b_l\in \mathbb{\Z}_{\ge 0}.$$ 
Then $\alpha\beta=p_1^{a_1}...p_k^{a_k}q_1^{b_1}...q_l^{b_l}\in \Pi(S)$.
\end{proof}

\begin{lem}\label{lem2} 
$\Pi(S)\cap \mathcal P=S$, for any subset $S\subseteq \mathcal P$.
\end{lem}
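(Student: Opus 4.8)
The plan is to prove the set equality $\Pi(S)\cap\mathcal P=S$ by establishing the two inclusions separately, the only nontrivial ingredient being the fundamental theorem of arithmetic.

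The inclusion $S\subseteq\Pi(S)\cap\mathcal P$ is immediate. If $q\in S$, then $q\in\mathcal P$ since $S\subseteq\mathcal P$; moreover $q=q^1$ is a product of powers of primes in $S$ (take $k=1$, $p_1=q$, $a_1=1$ in the definition of $\Pi(S)$), so $q\in\Pi(S)$. Hence $q\in\Pi(S)\cap\mathcal P$.

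For the reverse inclusion $\Pi(S)\cap\mathcal P\subseteq S$, I would take $q\in\Pi(S)\cap\mathcal P$. Since $q$ is prime we have $q\ne 0$ and $q\ne 1$, so $q$ is neither of the two elements $0,1$ adjoined in the definition; thus $q=p_1^{a_1}\cdots p_k^{a_k}$ for some $p_1,\dots,p_k\in S$ and $a_1,\dots,a_k\in\Z_{\ge0}$. After discarding the factors with exponent $0$ and merging repeated primes, this exhibits a factorisation of $q$ into primes all lying in $S$. By uniqueness of prime factorisation, and because $q$ is itself prime, this factorisation must reduce to a single prime equal to $q$; in particular that prime, namely $q$, belongs to $S$. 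Hence $q\in S$.

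The only point requiring any care is the reverse inclusion, where I invoke unique factorisation to conclude that a prime written as a product of powers of primes drawn from $S$ must coincide with one of those primes. The forward inclusion, together with the bookkeeping around the adjoined elements $0$ and $1$, is purely formal, so I do not expect a genuine obstacle here.
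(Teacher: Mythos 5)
Your proof is correct and follows essentially the same route as the paper's: the easy inclusion $S\subseteq\Pi(S)\cap\mathcal P$, then for the reverse inclusion noting a prime is neither $0$ nor $1$ and using unique factorisation to force the product $p_1^{a_1}\cdots p_k^{a_k}$ to collapse to a single prime in $S$. Your extra care about exponents equal to $0$ is a minor tidying of a detail the paper glosses over, not a different argument.
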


\begin{proof}  
In one direction, note that $S\subseteq\Pi(S)$, hence $S\subseteq \Pi(S)\cap \mathcal P$. 

Next, we show that $\Pi(S)\cap \mathcal P \subseteq S$: Let $p\in \Pi(S)\cap \mathcal P$. 
Since $p\in \mathcal P$, we have that $p\ne 0,1$. 
Then $p\in \Pi(S)$ means that $p= p_1^{a_1}...p_k^{a_k},$ where $p_1, ... , p_k\in S$ and $a_1,...,a_k\in\N$. But  
$p$ is a prime, and all $p_1, ... , p_k$ are primes,  hence $p=p_i$, for some $i=1,...,k$. Thus $p\in S$.
\end{proof}

Let now $\mathbb{P}(\mathcal P)$ be the set of all subsets of $\mathcal P$. By Lemma \ref{lem1}, there is  a map 
$$f\colon\mathbb{P}(\mathcal P)\to \{{\rm multiplicative\,\, subsets\,\, of}\,\, \mathbb{Z}\,\, {\rm containing}\,\, \{0,1\}\}$$
defined  by 
$$f(S)=\Pi(S)$$
for each $S\in \mathbb{P}(\mathcal P)$.
By Lemma \ref{lem2}, we have
  $$f(S)\cap \mathcal P=\Pi(S)\cap \mathcal P=S.$$ 
 Hence, $f$ is injective. Since $\mathbb{P}(P)$ is uncountable, we conclude that $f(\mathbb{P}(\mathcal P))$ is also uncountable. Thus, the set that consists of all multiplicative subsets of $\Z$ containing $\{0,1\}$ is uncountable. 
 
On the other hand, according to a theorem of M. Mather \cite{Ma}, there are only countably many homotopy classes of closed orientable $n$-manifolds.
It is easy to verify that if two closed orientable $n$-manifolds $M$ and $M'$ are homotopy equivalent, then $D(M)=D(M')$. 
So the subsets of $\Z$ which can be realised as sets of self mapping degrees 
$D(M)$ for some closed oriented $n$-manifolds $M$ are only countably many, for all positive integers $n$.   This completes the proof of Theorem \ref{non1}.

\subsection{Proof of Theorem \ref{inclusion}} 

We will prove the following precise version of Theorem \ref{inclusion}:

\begin{thm}\label{inclusionconcrete}
Let $M,N$ be closed oriented $n$-manifolds. For any integer $k\geq3$, there exist infinitely many  closed oriented hyperbolic $k$-manifolds $W$ such that $$D(M,N)=D(M\times W,N\times W).$$
\end{thm}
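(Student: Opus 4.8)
The plan is to prove the two inclusions of $D(M,N)=D(M\times W,N\times W)$ separately, isolating along the way two properties that $W$ must have. The inclusion $D(M,N)\subseteq D(M\times W,N\times W)$ is immediate and holds for \emph{every} hyperbolic $W$: if $f\colon M\to N$ has degree $d$, then $f\times\id_W$ has degree $d\cdot\deg(\id_W)=d$. All the content is in the reverse inclusion, so I fix a map $F\colon M\times W\to N\times W$, set $F_1=p_N\circ F$ and $F_2=p_W\circ F$, and aim to show $\deg(F)\in D(M,N)$.

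The geometric input is that a closed hyperbolic $k$-manifold with $k\geq3$ is aspherical and $\pi_1(W)$ is a torsion-free word-hyperbolic group, hence has trivial centre, infinite cyclic centralisers of nontrivial elements, and trivial centraliser of any non-elementary subgroup. First I would analyse $F_2$ via $(F_2)_*\colon\pi_1(M)\times\pi_1(W)\to\pi_1(W)$, whose restrictions $\mu,\nu$ to the two factors have commuting images. Since $W$ is a $K(\pi_1(W),1)$, the homotopy class of $F_2$ is determined by $(F_2)_*$, and the commuting-image structure forces a trichotomy: if $\nu$ is non-elementary then $\mu$ is trivial and $F_2\simeq\phi\circ p_W$ for a self-map $\phi$ of $W$; if $\nu$ is infinite cyclic then the whole image is cyclic and $F_2$ factors through $S^1$; and if $\nu$ is trivial then $F_2\simeq\psi\circ p_M$ for some $\psi\colon M\to W$. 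Correspondingly $F_2^*\omega_W$ is always ``pure'': it lies in $H^0(M)\otimes H^k(W)$, equals $0$ (the circle case kills $\omega_W$ as $k\geq2$), or lies in $H^k(M)\otimes H^0(W)$, where $\omega_W$ denotes the integral fundamental coclass.

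With $F_2^*\omega_W$ pure, the cup-product computation $\deg(F)=\langle F_1^*\omega_N\cup F_2^*\omega_W,[M\times W]\rangle$ collapses to a single Künneth term. In the first case $F_2^*\omega_W=\deg(\phi)\,(1\otimes\omega_W)$, and cupping with $F_1^*\omega_N$ only its $(n,0)$-part $f^*\omega_N$ survives, where $f=F_1|_{M\times\{w_0\}}\colon M\to N$; this gives $\deg(F)=\deg(f)\deg(\phi)$. Since $\|W\|>0$ (Gromov--Thurston) forces $D(W)\subseteq\{-1,0,1\}$, and since I will choose $W$ with no orientation-reversing isometry (so $\deg(\phi)\in\{0,1\}$ by Mostow rigidity), this term equals $0$ or $\deg(f)\in D(M,N)$. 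The second case gives $\deg(F)=0$ outright.

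The remaining case, $F_2\simeq\psi\circ p_M$ (the $W$-coordinate of $F$ being read off from $M$), is the main obstacle, and it is precisely why the statement is an existence result rather than a statement for all $W$: a coordinate swap on $W\times W$ realises degree $-1\notin D(W)$ when $M=N=W$. Here $F_2^*\omega_W=\psi^*\omega_W\otimes1$, whose contribution to the integer $\deg(F)$ vanishes unless $\psi^*\omega_W$ is non-torsion. To exclude the latter I would run a simplicial-volume argument with a \emph{fixed finite} set of test classes: choose generators $\sigma_1,\dots,\sigma_m$ of $H_k(M;\Z)/\mathrm{torsion}$; if $\psi^*\omega_W$ were non-torsion it would pair nontrivially with some $\sigma_i$, so $\psi_*\sigma_i=c[W]$ with $|c|\geq1$, whence $\|W\|\leq|c|\,\|W\|=\|\psi_*\sigma_i\|_1\leq\|\sigma_i\|_1\leq C(M)$, a bound depending only on $M$. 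Thus it suffices to take $W$ with $\|W\|>C(M)$. Finally I would invoke that in every dimension $k\geq3$ there are infinitely many closed hyperbolic manifolds of arbitrarily large volume admitting no orientation-reversing isometry; any such $W$ satisfies both requirements, yielding $\deg(F)\in D(M,N)$ and infinitely many realising $W$. The two delicate points to write out carefully are the trichotomy for $(F_2)_*$ (which rests on the centraliser structure of torsion-free hyperbolic groups and asphericity) and the uniformity of the bound $C(M)$.
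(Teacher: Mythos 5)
Your argument is correct and lands on exactly the construction the paper uses --- take $W$ a closed hyperbolic $k$-manifold with $D(W)=\{0,1\}$ and with $\|W\|$ exceeding the $\ell^1$-norms of a generating set of $H_k(M;\Z)/\mathrm{torsion}$ --- but you reach it by a more self-contained route. Where the paper invokes the product formula $D(M\times W,N\times W)=D(M,N)\cdot D(W)$ from \cite{NWW} (Theorem \ref{t:product}) as a black box, with Kotschick--L\"oh \cite{KL} supplying its hypothesis (i), you re-derive that formula for hyperbolic $W$ directly: the centraliser trichotomy for $(F_2)_*$ in the torsion-free word-hyperbolic group $\pi_1(W)$ (non-elementary image of $\nu$ forces $\mu$ trivial; nontrivial elementary image forces the whole image into an infinite cyclic centraliser; trivial $\nu$ gives the factorisation through $p_M$), together with asphericity of $W$, makes $F_2^*\omega_W$ K\"unneth-pure and the degree computation collapses exactly as you describe. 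Your handling of the residual case $F_2\simeq\psi\circ p_M$ is precisely the paper's Lemma \ref{l:norm}, and your coordinate-swap example correctly explains why the volume bound must depend on $M$, i.e.\ why the theorem is an existence statement. The one ingredient you invoke without justification --- that every dimension $k\geq3$ contains infinitely many closed hyperbolic manifolds of arbitrarily large volume with no orientation-reversing isometry, whence $D(W)=\{0,1\}$ via Mostow--Thurston rigidity --- is true but not free: it is the paper's Lemma \ref{l:hyp}, obtained from Belolipetsky--Lubotzky's theorem realising any finite group (take $\Z_{2i+1}$, whose odd order excludes orientation-reversing isometries) as the full isometry group of a closed hyperbolic $k$-manifold, with unboundedness of volume coming from Wang's finiteness theorem for $k>3$ and Meyerhoff's lower bound for orbifold volumes for $k=3$. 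You should cite or reproduce that argument; with that supplied, your proof is complete modulo standard facts about hyperbolic groups, asphericity and simplicial volume.
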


The proof of Theorem \ref{inclusionconcrete} is based on several results about closed oriented hyperbolic manifolds and the following specific form of \cite[Theorem 4.6]{NWW} (or \cite[Theorem 1.4]{Ne1} for self-maps):

\begin{thm}\label{t:product}
Let $M, N$ be two closed oriented manifolds of dimension $n$ and $W$ be a closed oriented manifold of dimension $k$. Suppose 
\begin{itemize}
\item[(i)] $W$ does not admit maps of non-zero degree from direct products $W_1\times W_2$, where $\dim W_1, \dim W_2 >0$ and
$\dim W_1+\dim W_2= k$, and
\item[(ii)]   for any  map $M\to W$, the induced homomorphism $H_k(M; \Q)\to H_k(W; \Q)$ is trivial.
\end{itemize}
Then $D(M\times W, N\times W)=D(M,N)\cdot D(W)$. In particular, if in addition
\begin{itemize}
\item[(iii)]  $D(W)=\{0,1\}$,
\end{itemize}
then $D(M\times W, N\times W)=D(M,N)$.
\end{thm}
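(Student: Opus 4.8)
The plan is to prove the two inclusions making up the equality $D(M\times W,N\times W)=D(M,N)\cdot D(W)$, where $D(M,N)\cdot D(W)=\{de : d\in D(M,N),\ e\in D(W)\}$, and I assume throughout that $M,N,W$ are connected. The inclusion $D(M,N)\cdot D(W)\subseteq D(M\times W,N\times W)$ is the easy direction and uses neither (i) nor (ii): if $f\colon M\to N$ has degree $d$ and $g\colon W\to W$ has degree $e$, then $f\times g$ has degree $de$ by multiplicativity of the degree under products. Granting the reverse inclusion, the ``in particular'' clause is then immediate, since under (iii) one has $D(M,N)\cdot\{0,1\}=\{0\}\cup D(M,N)=D(M,N)$, using $0\in D(M,N)$. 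So the whole content lies in proving $D(M\times W,N\times W)\subseteq D(M,N)\cdot D(W)$.

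For that, I would fix a map $F\colon M\times W\to N\times W$ of degree $\delta$; the case $\delta=0$ is trivial since $0\in D(M,N)$, so assume $\delta\neq0$. Writing $p_N,p_W$ for the two projections of $N\times W$ and $j_w\colon M\to M\times\{w\}$, $\iota_m\colon W\to\{m\}\times W$ for the slice inclusions, connectivity makes the integers $d:=\deg(p_N\circ F\circ j_w)\in D(M,N)$ and $e:=\deg(p_W\circ F\circ\iota_m)\in D(W)$ independent of the slice; the goal becomes $\delta=de$. I would encode this cohomologically: set $a:=F^*(\omega_N\otimes1)$ and $b:=F^*(1\otimes\omega_W)$ in $H^*(M\times W;\Q)$, with $\omega_{(\cdot)}$ the fundamental cohomology classes, so that $\langle a\smile b,[M\times W]\rangle=\langle F^*(\omega_N\otimes\omega_W),[M\times W]\rangle=\delta$. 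Decomposing $a=\sum_i a_i$ and $b=\sum_i b_i$ by K\"unneth, restriction to slices identifies the extreme components (via $j_w^*a=d\,\omega_M$ and $\iota_m^*b=e\,\omega_W$) as $a_n=d\,(\omega_M\otimes1)$ and $b_0=e\,(1\otimes\omega_W)$.

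The crux --- and the step I expect to be the main obstacle --- is to show that $b$ is \emph{pure}, i.e. that all mixing components $b_p\in H^p(M;\Q)\otimes H^{k-p}(W;\Q)$ with $p\ge1$ vanish, so that $b=e\,(1\otimes\omega_W)$. Granting purity, a short K\"unneth computation gives $a\smile b=e\,(a_n\smile(1\otimes\omega_W))=de\,(\omega_M\otimes\omega_W)$ and hence $\delta=de$, finishing the proof. The vanishing falls into three ranges. For $p=k$, restricting $b$ to a slice $M\times\{w\}$ shows $b_k=(p_W\circ F\circ j_w)^*\omega_W$, which vanishes by hypothesis (ii) because $p_W\circ F\circ j_w$ is a map $M\to W$. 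For $0<p<k$ I would argue by contradiction using hypothesis (i): if some $b_p\neq0$, then nondegeneracy of the K\"unneth pairing yields rational classes $[P]\in H_p(M;\Q)$ and $[Q]\in H_{k-p}(W;\Q)$ with $\langle b_p,[P]\otimes[Q]\rangle\neq0$; representing suitable nonzero multiples of $[P],[Q]$ by maps $\sigma\colon P'\to M$ and $\tau\colon Q'\to W$ from closed oriented manifolds of dimensions $p$ and $k-p$ (Thom's theorem, after clearing denominators), the composite $P'\times Q'\xrightarrow{\sigma\times\tau}M\times W\xrightarrow{p_W\circ F}W$ pulls $\omega_W$ back to a class pairing nontrivially with $[P'\times Q']$, hence has nonzero degree. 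Since $\dim P'+\dim Q'=k$ with both factors positive-dimensional, this contradicts (i). It is worth noting that the excluded values $p=0$ (the surviving main term) and $p=k$ correspond precisely to the degenerate products having a point factor, which is exactly why (i) cannot reach $p=k$ and the separate hypothesis (ii) is needed there. Assembling the three ranges gives purity of $b$, and with it $\delta=de$.
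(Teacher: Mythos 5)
Your argument is correct, and it is essentially the standard proof of this statement: the paper itself does not prove Theorem~\ref{t:product} but quotes it from \cite[Theorem 4.6]{NWW} (and \cite[Theorem 1.4]{Ne1} for self-maps), where the argument proceeds exactly along your lines --- the easy inclusion via product maps, the K\"unneth decomposition of $F^*(1\otimes\omega_W)$ with slice restrictions identifying $a_n=d(\omega_M\otimes 1)$ and $b_0=e(1\otimes\omega_W)$, hypothesis (ii) killing the component in $H^k(M;\Q)\otimes H^0(W;\Q)$, and Thom representability converting a nonzero mixed component into a non-zero degree map from a positive-dimensional product onto $W$, contradicting (i). The only implicit points worth noting are harmless: one assumes the manifolds connected (as the paper tacitly does for degrees to be slice-independent), and Thom representatives may be disconnected, but a nonzero total degree then gives a nonzero degree on some component pair, so (i) still applies.
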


Hence, in order to prove Theorem \ref{inclusionconcrete} we need to find hyperbolic manifolds $W$ satisfying (i)--(iii) in Theorem \ref{t:product}. First of all, hyperbolic manifolds satisfy (i), that is, they do not admit maps of non-zero degree from products~\cite{KL}. Next, given a closed oriented hyperbolic $k$-manifold $L$, with fundamental class $[L]\in H_k(L)$, its simplicial volume satisfies $\|L\|=\|[L]\|_1>0$  and  $|\deg(f)|\|L\|\le\|L\|$ for each map $f\colon L\to L$ \cite[6.1.4, 6.1.2]{Th}, thus $D(L)\subseteq\{-1,0,1\}$. (Here,  $\|\cdot\|_1$ denotes the $\ell^1$-semi-norm, which in top degree is the simplicial volume.)
Since $0,1$ always belong to $D(L)$,  we need $-1 \notin D(L)$. 
This is indeed the case quite often, as observed by S. Weinberger (see~\cite[Section 3]{Mu} and~\cite[Section 3.1]{Ne1}). 

To deal with (ii) and  (iii) of Theorem \ref{t:product},  we need the following two facts:

\begin{lem}\label{l:hyp}
For each $k\geq3$, there are infinitely many closed oriented hyperbolic $k$-manifolds $\{L_i\}$ such that $D(L_i)=\{0,1\}$ and the volume of $L_i$ is unbounded as $i$ tends to infinity.
\end{lem}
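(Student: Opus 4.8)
The plan is to reduce the condition $D(L)=\{0,1\}$ to a purely geometric statement about isometries, and then to exhibit the required infinite family. For a closed oriented hyperbolic $k$-manifold $L$ with $k\geq 3$, I would first show that $D(L)=\{0,1\}$ if and only if $L$ admits no orientation-reversing self-isometry. The inclusion $D(L)\subseteq\{-1,0,1\}$ is already recorded in the excerpt via simplicial volume, and $0,1\in D(L)$ always, so the only issue is whether $-1\in D(L)$. Given a self-map $f\colon L\to L$ of degree $-1$, I would argue that $f$ is $\pi_1$-surjective (degree $\pm1$ maps of closed oriented manifolds always are), hence an isomorphism on $\pi_1$, since the fundamental group of $L$ is finitely generated and residually finite, therefore Hopfian. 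As $L$ is aspherical, $f$ is then a homotopy equivalence, and by Mostow rigidity (this is where $k\geq 3$ enters) it is homotopic to an isometry $g$ with $\deg g=\deg f=-1$, i.e.\ an orientation-reversing isometry. The converse is immediate, so $D(L)=\{0,1\}$ is equivalent to $\mathrm{Isom}(L)=\mathrm{Isom}^+(L)$.

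With this reduction in hand, it remains to produce, for each $k\geq 3$, infinitely many closed oriented hyperbolic $k$-manifolds of unbounded volume carrying no orientation-reversing isometry. I would begin from a single such example, whose existence is exactly Weinberger's observation already cited in \cite{Mu, Ne1}: chirality is abundant among hyperbolic manifolds. To generate an infinite family of unbounded volume I would pass to a tower of finite covers corresponding to a descending chain of finite-index subgroups of $\pi_1(L_0)$; since $\mathrm{vol}(L_i)=[\pi_1 L_0:\pi_1 L_i]\cdot\mathrm{vol}(L_0)$, the volumes are unbounded and, being a homotopy invariant, force the $L_i$ to be pairwise non-homeomorphic.

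The main obstacle is that a finite cover of a chiral manifold can acquire new isometries, in particular orientation-reversing ones, so chirality need not persist up the tower. To control this I would either (a) take the subgroups $\pi_1 L_i$ characteristic in $\pi_1 L_0$ and analyse how any hypothetical orientation-reversing isometry of $L_i$ interacts with the deck group, aiming to descend it to an orientation-reversing isometry of $L_0$ and reach a contradiction; or, more robustly, (b) invoke the stronger known constructions of \emph{asymmetric} hyperbolic $k$-manifolds, those with trivial isometry group, which are a fortiori chiral and hence already satisfy $D(L)=\{0,1\}$ by the reduction above. To force unbounded volume along route (b), in dimensions $k\geq 4$ I would use Wang's finiteness theorem (only finitely many hyperbolic $k$-manifolds have volume below a fixed bound), so that infinitely many distinct examples automatically have unbounded volume; in dimension $3$, where this finiteness fails, I would instead feed one asymmetric example into the covering-tower construction of the previous paragraph. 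I expect route (b) to be the cleaner one to write up, with the genuine difficulty concentrated in securing the absence of orientation-reversing isometries as the volume grows.
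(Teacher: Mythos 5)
Your reduction of $D(L)=\{0,1\}$ to the absence of an orientation-reversing self-isometry is correct and is exactly the paper's first step: the paper quotes Thurston's theorem that a degree-$\pm1$ self-map of a closed hyperbolic $k$-manifold, $k\geq3$, is homotopic to an isometry, and your Hopfian-plus-asphericity-plus-Mostow argument is a spelled-out version of the same fact. The gap lies in the production of the infinite family, and it sits exactly where you flag it. Route (b) is fine for $k\geq4$ \emph{provided} you can cite a source for infinitely many asymmetric closed hyperbolic $k$-manifolds (the Belolipetsky--Lubotzky theorem used in the paper can be made to supply this), after which Wang's finiteness gives unbounded volume. But for $k=3$ you fall back on route (a), the covering tower, whose central difficulty you name without resolving: an orientation-reversing isometry of a finite cover $L_i$ is an element of $\mathrm{Isom}(\mathbb{H}^3)$ normalising $\pi_1(L_i)$, and there is no reason it should normalise $\pi_1(L_0)$, so it need not descend. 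Taking $\pi_1(L_i)$ characteristic in $\pi_1(L_0)$ does not repair this, because the isometry induces an automorphism of $\pi_1(L_i)$, not a priori one of $\pi_1(L_0)$. (One can push such an argument through for non-arithmetic $L_0$ via Margulis's commensurator theorem, but that is a genuine additional piece of work.) As written, the $k=3$ case is not proved.

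The paper kills both problems with a single choice: by Belolipetsky--Lubotzky, take $L_i$ to be a closed oriented hyperbolic $k$-manifold with $\mathrm{Isom}(L_i)\cong\Z_{2i+1}$. An orientation-reversing isometry must have even order (the orientation character is a homomorphism to $\Z/2$), so none exists and $D(L_i)=\{0,1\}$ by the common reduction; the $L_i$ are pairwise distinct because their isometry groups are; and in dimension $3$, where Wang's finiteness fails, the volume is forced up by the isometry group itself via
$V(L_i)=(2i+1)\,V\bigl(L_i/\Z_{2i+1}\bigr)\geq(2i+1)C$,
where $C>0$ is Meyerhoff's lower bound for volumes of hyperbolic $3$-orbifolds. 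This is the cleanest repair of your dimension-$3$ gap: rather than forbidding all symmetry, prescribe an odd-order symmetry group large enough to force the volume to grow.
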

\begin{proof}
By a result of Belolipesky and Lubotszky \cite[Theorem 1.1]{BL}, for each $k\geq2$ and any finite group $\Gamma$, there exists a closed oriented hyperbolic $k$-manifold $L$ such that  $\mathrm{Isom}(L)\cong\Gamma$, where $\mathrm{Isom}(L)$ is the full isometry group of $L$. By  \cite[Theorem 6.4]{Th},  every map $f\colon L\to L$ of $|\deg(f)|=1$ is homotopic to an isometry when $k\geq3$. Note that each orientation reversing isometry must has even order. Hence, if	 $\Gamma$ is of odd order, then we have $\deg(f)=1$ and $D(L)=\{0,1\}$. 

Now let $L_i$ be a closed oriented hyperbolic $k$-manifold such that $\mathrm{Isom}(L_i)\cong\Z_{2i+1}$, the cyclic group of order 
$2i+1$. Then, the family $\{L_i\}$ contains  infinitely many hyperbolic $k$-manifolds with $D(L_i)=\{0,1\}$. The volume of $L_i$ is unbounded as $i$ tends to infinity. Indeed, if $k>3$, this follows directly from H. C. Wang's theorem \cite{Wa} that there are only finitely many hyperbolic $k$-manifolds with volume bounded by a fixed number $r>0$. If $k=3$, we have the following equality about hyperbolic volumes
$$V(L_i)=(2i+1)V(L_i/\Z_{2i+1}).$$
By a result of Meyerhoff \cite{Me}, the volumes of 3-dimensional hyperbolic orbifolds have a lower bound $C>0$, hence $V(L_i)> (2i+1)C$ is unbounded as $i$ tends to infinity.
\end{proof}

\begin{lem}\label{l:norm}
Let $M$ and $W$ be closed oriented manifolds of dimensions $n$ and $k$ respectively, and $\{\alpha_1,..., \alpha_n\}$ be a basis of $H_k(M;\mathbb{Q})$ such that 
\begin{itemize}
\item[(i)] each $\alpha_i$ is the image of a homology class in $H_k(M;\mathbb{Z})$, and 
\item[(ii)] $\|W\|>\max\{\|\alpha_i\|_1|i=1, ... , k\}.$ 
\end{itemize}
Then for any  map $M\to W$, the induced homomorphism $H_k(M; \Q)\to H_k(W; \Q)$ is trivial.
\end{lem}
\begin{proof}
Let $f\colon M\to W$ be a map.  Let $[W]$ be the fundamental class of $W$. Then for any $i$,
as integer homology classes, we have  $H_k(f)(\alpha_i)=d_i[W]$ for some $d_i\in\Z$.
By our assumption on $\|W\|$ and the functoriality of the $\ell^1$-semi-norm (cf.~\cite[p.8]{Gr}), we obtain
$$\|\alpha_i\|_1\geq\|H_k(f)(\alpha_i)\|_1=|d_i|\|W\|>|d_i|\|\alpha\|_1.$$
Thus $d_i=0$ holds and $H_k(f)(\alpha_i)=0$. Since $\{\alpha_1,\cdots,\alpha_n\}$ is a basis of $H_k(M;\mathbb{Q})$ and $H_k(f)$
is linear, $H_k(f)$ must be trivial. 
\end{proof}

Now we finish the proof of Theorem \ref{inclusionconcrete}:  Let $\{\alpha_1,..., \alpha_n\}$ be a basis of $H_k(M;\mathbb{Q})$ chosen as in Lemma \ref{l:norm} (i).
Recall that for each closed oriented hyperbolic manifold, its hyperbolic volume is proportional to the simplicial volume~\cite[Prop. 6.1.4]{Th}.  In Theorem \ref{t:product}, we can take $W$ to be some closed oriented hyperbolic $k$-manifold $L_i$ given by Lemma \ref{l:hyp} such that Lemma \ref{l:norm} (ii) is satisfied as well, that is, $$\|W\|>\max\{\|\alpha_i\|_1|i=1, ... , n\}.$$
Then $W$ satisfies conditions (i) (ii) and  (iii) of Theorem \ref{t:product} (by Lemma \ref{l:hyp}, Lemma \ref{l:norm} and~\cite{KL}). 
Finally, we note that there are infinitely many choices for $W$ (by Lemma \ref{l:hyp}), thus we conclude Theorem \ref{inclusionconcrete}.

\begin{rem}
Note that for $k>n$ (i.e. in all but finitely many dimensions for each $n$), condition (ii) of Theorem \ref{t:product} is automatically satisfied and thus Lemma \ref{l:norm} and the second part of Lemma \ref{l:hyp} about volumes is not needed in the proof of Theorem \ref{inclusionconcrete}.
\end{rem}

\subsection{Proof of Theorem \ref{extension}}
Let $A$ be a finite set of integers containing $0$. Then $A$ is the mapping degree set of some $3$-manifolds, by 
Theorem \ref{t:CMV}. For each $n=3+k$, where $k\geq3$, $A$ is also the mapping degree set of some $n$-manifolds, by Theorem \ref{inclusion}.

For the second part of Theorem \ref{extension}, note that there are only two finite multiplicative sets containing $\{0, 1\}$, namely $\{ 0, 1\}$ itself and $\{-1, 0, 1\}$. The first set is realised by a closed hyperbolic $n$-manifold $L$ as in Lemma \ref{l:hyp}. The second set is realised by $L\#\overline L$, where $\overline L$ is the manifold $L$ with the opposite orientation, since  $L\# \overline L$ admits a degree $-1$ self-map which is realised by the reflection about the $(n-1)$-sphere of the connected sum. Finally note that by \cite{Gr} we have 
$||L\#  \overline L||=||L||+||\overline L||=2||L||>0$. So $D(L\# \overline L)$ is finite.

\subsection{A proof for $D(\CP^n)=\{k^n \ | \  k\in \Z\}$ (Example \ref{cpn})} Without a precise reference for this known fact, we provide 
a proof: If $t$ is a generator of $H^2(\CP^n;\Z)$, then  
$H^*(\CP^n; \Z)=\Z[t]/(t^{n+1}).$
From this, one derives
that  $D(\CP^n)\subseteq \{k^n  \ | \  k\in \Z\}$. 
Below we  show that for each integer $k$,
there is a map $$f_k\colon\mathbb{CP}^n\rightarrow \mathbb{CP}^n$$ of degree $k^n$. We may assume that $k\ne 0$.

If $k>0$, define $$f_k[z_0:z_1:...:z_n]=[z_0^k:z_1^k:...:z_n^k], \ [z_0:z_1:...:z_n]\in\mathbb{CP}^n.$$
Note that for $\mathbb{CP}^1=\{[z_0:z_1:0:0:...:0]\}\subseteq \mathbb{CP}^n$, the map  $f_k|_{\mathbb{CP}^1}\colon\mathbb{CP}^1\rightarrow \mathbb{CP}^1$ has degree $k$. Since $[\mathbb{CP}^1]$ is a generator of $H_2(\mathbb{CP}^n;\mathbb{Z})$, we have that $f_k\colon H_2(\mathbb{CP}^n;\mathbb{Z})\rightarrow H_2(\mathbb{CP}^n;\mathbb{Z})$ is given by multiplication by $k$, and thus, by algebraic duality, $f_k^*\colon H^2(\mathbb{CP}^n;\mathbb{Z})\rightarrow H^2(\mathbb{CP}^n;\mathbb{Z})$ is given by $f_k^*(t)=kt$. By the ring structure of $H^*(\CP^n; \Z)$, we have $\deg(f_k)=k^n$.

If $k<0$, define
$$ f_k[z_0:z_1:...:z_n]=[\bar{z_0}^{-k}:\bar{z_1}^{-k}:...:\bar{z_n}^{-k}].$$
The map $f_k|_{\mathbb{CP}^1}\colon\mathbb{CP}^1\rightarrow\mathbb{CP}^1$ has still degree $k$, and thus, as above, 
we have $\deg(f_k)=k^n$.

\subsection*{Acknowledgements} The authors thank Clara L\"oh and Antonio Viruel for their enlightening comments on their results in \cite{LU} and \cite{CMV} respectively. H.S. is partially supported by Simons Collaboration Grant 615229. C.N. would like to thank IHES for the hospitality.

\bibliographystyle{ams}

\end{document}